\numberwithin{equation}{section}
\newtheorem{lemma}{Lemma}[section]
\newtheorem{prop}[lemma]{Proposition}
\newtheorem{thm}[lemma]{Theorem}
\theoremstyle{definition}
\newtheorem{example}[lemma]{Example}
\newtheorem{conjecture}[lemma]{Conjecture}
\theoremstyle{remark}
\def\p{\mathbf{p}}
\def\A{\mathcal{A}}
\def\I{\mathcal{I}}
\def\om{\bm{\omega}}
\def\F{\mathcal{F}}
\numberwithin{equation}{section} \numberwithin{table}{section}
\title{On normal numbers and self-similar measures}
\author{Simon Baker\\ \\
\emph{School of Mathematics,} \\ \emph{University of Birmingham,} \\ \emph{Birmingham,  B15 2TT, UK.} \\ Email: simonbaker412@gmail.com\\}
\date{\today}
\begin{document}
\maketitle

\begin{abstract}
In this paper we prove that if $\{\varphi_i(x)=\lambda x+t_i\}$ is an equicontractive iterated function system and $b$ is a positive integer satisfying $\frac{\log b}{\log |\lambda|}\notin\mathbb{Q},$ then almost every $x$ is normal in base $b$ for any non-atomic self-similar measure of $\{\varphi_i\}$.\\

%Let $\{\varphi_i(x)=\lambda x+t_i\}$ be an equicontractive iterated function system and $b$ be a positive integer satisfying $\frac{\log b}{\log |\lambda|}\notin\mathbb{Q}$. In this paper we prove that under these assumptions, almost every $x$ is normal in base $b$ for any non-atomic self-similar measure of $\{\varphi_i\}$. \\
%Importantly this result holds without any separation assumptions on the underlying iterated function system.\\
 
\noindent \emph{Mathematics Subject Classification 2010}: 11A63, 11K16, 28A80, 28D05. \\

\noindent \emph{Key words and phrases}: Normal numbers, self-similar measures, uniform distribution.

\end{abstract}

\section{Introduction}
Let $b\geq 2$ be an integer. A real number $x$ is said to be normal in base $b$ if the sequence $(b^nx)_{n=1}^{\infty}$ is uniformly distributed modulo one. For a real number $x,$ being normal in base $b$ indicates that the base $b$ expansion of $x$ behaves like a sequence of uniformly distributed i.i.d random variables. The study of normal numbers was pioneered by Borel in \cite{Bor}. He proved that Lebesgue almost every $x$ is normal in base $b$ for any integer $b\geq 2$. Since these beginnings, the study of normal numbers has developed into an important and active branch of mathematics. It has significant connections to Ergodic Theory, Fractal Geometry, and Number Theory. We refer the reader to the books \cite{Bug} and \cite{KN} for a more detailed introduction to this subject.

Despite the result of Borel mentioned above, it is often a challenging problem to demonstrate that a real number is normal in a given base $b$. Indeed there are relatively few explicit examples of real numbers that are normal in a base $b$ (see \cite{Bug}). Nevertheless it is reasonable to expect that a real number will be normal in base $b$ if it is defined in a manner that is independent from $b$. This reasoning leads to natural conjectures which state that well known constants like $\pi$ and $e$ are normal in any base. These conjectures are extremely challenging and it is necessary to readjust our expectations. Instead of studying specific real numbers, one can study Borel probability measures on $\mathbb{R}$. If a Borel probability measure $\mu$ is defined in a manner that is independent from $b$, then it is reasonable to expect that $\mu$ almost every $x$ will be normal in base $b$. In this paper we will pursue this line of research in the special case when $\mu$ is a self-similar measure of an iterated function system.

\subsection{Background and statement of results}
We call a map $\varphi:\mathbb{R}\to\mathbb{R}$ a contracting similarity if there exists $\lambda\in (-1,0)\cup (0,1)$ and $t\in\mathbb{R}$ such that $\varphi(x)=\lambda x +t$. We call a finite set of contracting similarities an iterated function system or IFS for short. A well known theorem due to Hutchinson \cite{Hut} states that for any IFS $\{\varphi_i\}_{i\in \I},$ there exists a unique non-empty compact set $X$ satisfying 
\begin{equation}
\label{selfsimilarset}
X=\bigcup_{i\in \I} \varphi_{i}(X).
\end{equation} We call $X$ the self-similar set of $\{\varphi_i\}_{i\in \I}$. Many well known fractal sets can be realised as the self-similar set of an iterated function system. For example the middle third Cantor set is the self-similar set of the IFS $\{\phi_{1}(x)=\frac{x}{3},\phi_{2}(x)=\frac{x+2}{3}\}.$ Given an IFS $\{\varphi_i\}_{i\in \I}$ and a probability vector $\p=(p_i)_{i\in \I}$, there exists a unique Borel probability measure $\mu_{\p}$ supported on $X$ satisfying $$\mu_{\p}=\sum_{i\in \I}p_i\cdot \varphi_{i}\mu_{\p}.$$ Here $\varphi_{i}\mu_{\p}$ denotes the pushforward of $\mu_{\p}$ by the similarity $\varphi_{i}$. We call $\mu_{\p}$ the self-similar measure corresponding to $\{\varphi_i\}_{i\in \I}$ and $\p$. We say that a self-similar measure $\mu_\p$ is fully supported if $p_{i}>0$ for all $i\in \I$. It is often the case that there is no loss of generality in assuming that a self-similar measure is fully supported. This is because if a self-similar measure is not fully supported, for each $i\in\I$ satisfying $p_i=0$ we can remove the corresponding similarity from the IFS and remove the corresponding entry from $\p$. This new IFS and probability vector yields the same self-similar measure. However it is now fully supported with respect to the new probability vector. 

Many important properties of a self-similar set depend upon the contraction ratios of the similarities in the IFS, i.e. those $\lambda_i\in (-1,0)\cup (0,1)$ such that $\varphi_i(x)=\lambda_{i}x+t_{i}$. With our previous discussion in mind, it is reasonable to expect that the arithmetic properties of the contraction ratios may influence the existence of normal numbers within the self-similar set. The following conjecture is natural and follows from these considerations.

\begin{conjecture}
	\label{Main conjecture}
Let $\{\varphi_i(x)=\lambda_i x+t_i\}_{i\in \I}$ be an IFS and $b\geq 2$ be an integer. Suppose that $\frac{\log b}{\log |\lambda_i|}\notin\mathbb{Q}$ for some $i\in \I$. Then almost every $x$ is normal in base $b$ for any non-atomic fully supported self-similar measure of $\{\varphi_i\}_{i\in \I}$.
\end{conjecture}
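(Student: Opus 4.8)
The plan is to reduce the statement, by standard metric number theory, to a single Fourier-decay estimate for $\mu_\p$ along the arithmetically structured frequencies $h\,b^m(b^r-1)$, and then to prove that estimate by transplanting the equicontractive mechanism into the symbolic coding of $\mu_\p$, where all the work is carried by the one generic ratio $\lambda_{i_0}$ satisfying $\frac{\log b}{\log|\lambda_{i_0}|}\notin\mathbb{Q}$. By Weyl's criterion, $(b^nx)$ is uniformly distributed modulo one for $\mu_\p$-a.e.\ $x$ as soon as $\frac1N\sum_{n=1}^N e^{2\pi ihb^nx}\to0$ for $\mu_\p$-a.e.\ $x$ and every nonzero integer $h$; by the criterion of Davenport, Erd\H{o}s and LeVeque the latter holds once, for every such $h$,
\begin{equation*}
\sum_{N=1}^\infty\frac1N\int\Big|\frac1N\sum_{n=1}^N e^{2\pi ihb^nx}\Big|^2\,d\mu_\p(x)<\infty .
\end{equation*}
Expanding the square replaces the inner integral by $\frac1{N^2}\sum_{n,m\le N}\widehat{\mu_\p}(h(b^n-b^m))$, where $\widehat{\mu_\p}(\eta)=\int e^{2\pi i\eta x}\,d\mu_\p(x)$. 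The diagonal contributes $O(1/N)$ and is summable; writing the off-diagonal terms with $n>m$ as $b^n-b^m=b^m(b^{r}-1)$ with $r=n-m\ge1$ and interchanging summations, the whole series is controlled once one has logarithmic Fourier decay
\begin{equation*}
|\widehat{\mu_\p}(h\,b^m(b^r-1))|\le C_h\,(\log(b^m(b^r-1)))^{-\beta}\quad\text{for some }\beta>0 .
\end{equation*}
Indeed $\log(b^m(b^r-1))\ge(m+r-1)\log b$, so this bound is $\lesssim_h(m+r)^{-\beta}$ and the rearranged series is dominated by $\sum_K K^{-1-\beta}<\infty$. The conjecture is thus reduced to logarithmic Fourier decay of $\mu_\p$ along the frequencies $h\,b^m(b^r-1)$.

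To produce this decay I would work with the coding $\pi(\om)=\sum_{n\ge1}\big(\prod_{j<n}\lambda_{\om_j}\big)t_{\om_n}$, which pushes the Bernoulli measure $\p^{\mathbb{N}}$ forward to $\mu_\p$, so that $\widehat{\mu_\p}(\xi)=\mathbb{E}_\om\big[e^{2\pi i\xi\pi(\om)}\big]$. Iterating the self-similarity relation $\widehat{\mu_\p}(\xi)=\sum_i p_i e^{2\pi it_i\xi}\,\widehat{\mu_\p}(\lambda_i\xi)$ yields the sub-averaging bound $|\widehat{\mu_\p}(\xi)|\le\mathbb{E}\big[|\widehat{\mu_\p}(\Lambda_k\xi)|\big]$, where $\Lambda_k=\prod_{j=1}^k\lambda_{I_j}$ and $I_1,I_2,\dots$ are i.i.d.\ with law $\p$, so that $\log|\Lambda_k|$ is a random walk with negative drift $\sum_i p_i\log|\lambda_i|$. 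The idea is to extract genuine product-type decay from the (by the strong law of large numbers, since $\mu_\p$ is fully supported and hence $p_{i_0}>0$) positive-density set of indices $j$ with $I_j=i_0$: at each such index the scale is contracted by the fixed generic factor $\lambda_{i_0}$, and the irrationality $\frac{\log b}{\log|\lambda_{i_0}|}\notin\mathbb{Q}$ is exactly what should force the resulting family of scales $\lambda_{i_0}^{\,s}\,h\,b^m(b^r-1)$ to equidistribute well enough (here non-atomicity of $\mu_\p$ is used to guarantee that the relevant one-step trigonometric factor is bounded by $1-\delta$ on a set of scales of positive density). Counting the $\asymp(m+r)$ occurrences of $i_0$ among the relevant scales should then force the product to be small---heuristically as strongly as a power of $b^m(b^r-1)$, and in any case with at least the logarithmic strength required above.

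The main obstacle is that, once the ratios $\lambda_i$ are allowed to differ, the clean infinite-product factorisation $\widehat{\mu_\p}(\xi)=\prod_{j\ge0}g(\lambda^j\xi)$ that underlies the equicontractive argument is simply not available: integrating out a single coordinate $\om_n$ does not factor, because $\om_n$ also enters every later scaling factor $\Lambda_m$ ($m\ge n$) through $\lambda_{\om_n}$. Consequently the ``product over occurrences of $i_0$'' is only a random, interrupted product, and the intervening non-$i_0$ symbols---whose ratios may themselves be resonant with $b$ (i.e.\ $\frac{\log b}{\log|\lambda_i|}\in\mathbb{Q}$)---displace the active scales in a way that is delicate to track. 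Two further difficulties must be faced: one must keep the decay under control uniformly in the multiplicative shift $b^m$ (in the homogeneous case this is automatic because the additional scales contribute factors $\le1$, a monotonicity that is lost here), and one must allow $1/\lambda_{i_0}$ to be a Pisot number, so that $\mu_\p$ need not be a Rajchman measure and the decay must be wrung from the genericity of $b$ alone rather than from any unconditional decay of $\widehat{\mu_\p}$. I expect the right way to organise the interruptions is to pass to the countable iterated function system induced on the returns to the symbol $i_0$ (every one of whose maps carries exactly one factor $\lambda_{i_0}$), and to replace the deterministic product of the equicontractive proof by a second-moment estimate for $\widehat{\mu_\p}$ across consecutive good scales together with a large-deviation bound for the random walk $\log|\Lambda_k|$. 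Carrying this out so as to prove the displayed logarithmic decay is the crux on which the conjecture rests.
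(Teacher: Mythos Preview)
The statement you are trying to prove is labelled \emph{Conjecture} in the paper for a reason: the paper does not prove it in general, only the equicontractive special case (Theorem~\ref{Main theorem}). Your proposal is therefore being measured against an open problem, and indeed it does not close the gap.

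Your reduction via Weyl and Davenport--Erd\H{o}s--LeVeque to a logarithmic decay estimate $|\widehat{\mu_\p}(h\,b^m(b^r-1))|\lesssim_h (m+r)^{-\beta}$ is standard and correctly stated; the series manipulation is fine. The genuine gap is exactly the one you yourself identify: you do not prove this Fourier decay estimate. The mechanism you sketch---inducing on returns to the symbol $i_0$ so as to see a fixed contraction $\lambda_{i_0}$ at a positive density of steps, then hoping that irrationality of $\frac{\log b}{\log|\lambda_{i_0}|}$ forces equidistribution of the relevant scales, and controlling the interruptions by a second-moment and large-deviation argument---remains at the level of heuristics. The core difficulty is precisely that the non-$i_0$ ratios can be rationally related to $b$, so between two consecutive $i_0$'s the accumulated scaling $\prod_j\lambda_{I_j}$ can land arbitrarily close to $b$-adically resonant scales; there is no argument given for why the single generic ratio $\lambda_{i_0}$ overrides this. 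Your own final sentence concedes that ``carrying this out \ldots\ is the crux on which the conjecture rests,'' which is an accurate self-assessment: this is not a proof but a plan whose hard step is missing.

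It is also worth noting that even for the equicontractive case the paper does \emph{not} proceed via the DEL criterion or by establishing any quantitative Fourier decay of $\mu_\p$. Instead it disintegrates $\mu_\p$ as $\int\mu_{\om}\,d\mathbb{P}$ into random measures that behave like self-similar measures for an IFS with strong separation, and then applies a dynamical argument of Hochman (an orbit--measure comparison theorem, an equidistribution result for $(n\theta,\sigma^{\lfloor\beta n\rfloor}\om)$, and an $L^2$ Fourier estimate for scaled measures). This method sidesteps entirely the question of Fourier decay of $\mu_\p$ along $b$-adic frequencies, which is why it handles the non-Rajchman Pisot case uniformly. Extending that argument to non-equicontractive systems runs into its own obstacle---the blow-ups $\mu_{\om,\A_n(x)}$ no longer have the clean form $\delta_y\ast S_{\lambda^{n'}}\mu_{\sigma^{n'}\om}$---but that is a different obstruction from the one you describe, and neither has been overcome.
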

Clearly Conjecture \ref{Main conjecture} is false without the assumption $\frac{\log b}{\log |\lambda_i|}\notin\mathbb{Q}$ for some $i\in \I$. Take for example the middle third Cantor set. It contains no real numbers that are normal in base $3$. The existence of the digit $i\in \I$ for which $\frac{\log b}{\log |\lambda_i|}\notin\mathbb{Q}$ should be interpreted as a mechanism ensuring that the self-similar set $X$ has no arithmetic structure relating to the base $b$. This lack of structure means that the non-atomic fully supported self-similar measures are in a sense independent from $b$. As such it is reasonable to expect that these measures would give full mass to the set of real numbers that are normal in base $b$. 

The first instances of a special case of Conjecture \ref{Main conjecture} being proved can be found in the papers of Cassels and Schmidt \cite{Cas,Sch}. They considered the IFS $\{\phi_{1}(x)=\frac{x}{3},\phi_{2}(x)=\frac{x+2}{3}\}$ and the probability vector $\p=(1/2,1/2)$.  They proved that with respect to the corresponding self-similar measure, almost every $x$ is normal in base $b$ if $b$ is not a power of three. By proving this result, these authors answered in the affirmative a question of Steinhaus on whether there exists a real number that is normal in base $b$ for infinitely many $b$ but not all $b$. Hochman and Shmerkin proved Conjecture \ref{Main conjecture} under the additional assumption that the IFS satisfies a certain separation condition \cite{HocShm}. Important progress towards a proof of Conjecture \ref{Main conjecture} was made in a recent paper by Algom et al \cite{ARHW}. In this paper the authors proved that if a self-similar measure $\mu_{\p}$ is a Rajchman measure, that is its Fourier transform converges to zero, then $\mu_{\p}$ almost every $x$ is normal in base $b$ for any $b\geq 2$. By combining this result with existing theorems on when self-similar measures are Rajchman measures, we can conclude a number of special cases of Conjecture \ref{Main conjecture}. In particular, by a result of Li and Sahlsten \cite{LS}, it follows that Conjecture \ref{Main conjecture} is true under the additional assumption that there exists $i,i'\in I$ such that $\frac{\log |\lambda_i|}{\log |\lambda_{i'}|} \notin \mathbb{Q}.$ This leaves open the case when all of the contraction ratios are integer powers of some parameter $\lambda$. This case was studied by Br\'{e}mont in \cite{Bre}. He proved that for such an IFS, if $\mu_{\p}$ is a non-atomic self similar measure and it is not a Rajchman measure, then the parameter $\lambda$ is the reciprocal of a Pisot number and the underlying IFS can be conjugated to an IFS with translation parameters satisfying $t_i\in\mathbb{Q}(\lambda)$ for all $i\in \I$. Recall that a real number is said to be a Pisot number if it is an algebraic integer greater than $1$ whose Galois conjugates all have modulus strictly less than $1$. The golden mean is an example of a Pisot number. Combining this result of Br\'{e}mont with the aforementioned result of Algom et al, it follows that Conjecture \ref{Main conjecture} is true if the contraction ratios are not all integer powers of a common Pisot number. In another recent paper, Hochman gave an alternative proof of a well known theorem due to Host \cite{Hoc}. At the end of this paper, Hochman commented that his method could be extended to prove Conjecture \ref{Main conjecture} under the assumption that the IFS satisfies the strong separation condition, i.e. $\varphi_{i}(X)\cap \varphi_{i'}(X)=\emptyset$ for $i,i'\in \I$ such that $i\neq i'$.

In this paper we prove Conjecture \ref{Main conjecture} under the assumption that the IFS is equicontractive, i.e. there exists $\lambda\in(-1,0)\cup(0,1)$ such that $\lambda_{i}=\lambda$ for all $i\in \I$. We emphasise that this result allows for $\lambda$ to be the reciprocal of a Pisot number, and can handle self-similar measures that are not Rajchman measures, see Example \ref{Bernoulliconvolution}. Our proof is independent from the work of Algom et al and Br\'{e}mont, and treats both the Rajchman and non-Rajchman cases simultaneously.

%In a recent paper, Hochman gave an alternative proof of a well known theorem due to Host \cite{Hoc}. At the end of this paper, Hochman commented that his method could be extended to prove Conjecture \ref{Main conjecture} under the assumption that the IFS satisfies the strong separation condition, i.e. $\varphi_{i}(X)\cap \varphi_{i'}(X)=\emptyset$ for $i,i'\in \I$ such that $i\neq i'$. In each of the statements listed above, the underlying IFS always satisfies some additional separation hypothesis. In this paper we prove a special case of Conjecture \ref{Main conjecture} which does not make any additional separation assumptions. In particular, we prove Conjecture \ref{Main conjecture} under the assumption that the IFS is equicontractive, i.e. there exists $\lambda\in(-1,0)\cup(0,1)$ such that $\lambda_{i}=\lambda$ for all $i\in \I$.

\begin{thm}
	\label{Main theorem}
	Let $\{\varphi_i(x)=\lambda x+t_i\}_{i\in \I}$ be an equicontractive IFS and $b\geq 2$ be an integer. Suppose that $\frac{\log b}{\log |\lambda|}\notin\mathbb{Q}$. Then almost every $x$ is normal in base $b$ for any non-atomic self-similar measure of $\{\varphi_i\}_{i\in \I}$. 
\end{thm}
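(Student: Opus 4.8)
The plan is to verify Weyl's criterion $\mu$-almost everywhere. It suffices to fix a nonzero integer $k$ and show that the exponential sums $S_N(x)=\frac1N\sum_{n=1}^N e^{2\pi i kb^n x}$ tend to $0$ for $\mu$-a.e. $x$, and then intersect over the countably many $k$ (with $-k$ handled by conjugation). Writing $\hat{\mu}(\xi)=\int e^{2\pi i\xi x}\,d\mu(x)$, the second moment expands as $\int|S_N|^2\,d\mu=\frac{1}{N^2}\sum_{n,m=1}^N \hat{\mu}\bigl(k(b^m-b^n)\bigr)$. The diagonal contributes $1/N$, so everything reduces to controlling $\hat{\mu}$ at the off-diagonal frequencies $k(b^m-b^n)=kb^n(b^{m-n}-1)$. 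These grow geometrically, and in the non-Rajchman case they need not be points where $\hat\mu$ is small; this is the main obstacle and the whole difficulty of the theorem lives here.

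The device I would use to circumvent non-decay is self-similarity. From $\mu=\sum_{i\in\I} p_i\varphi_i\mu$ one gets the functional equation $\hat{\mu}(\xi)=P(\xi)\hat{\mu}(\lambda\xi)$ with $P(\xi)=\sum_{i\in\I} p_i e^{2\pi i\xi t_i}$, hence $\hat{\mu}(\xi)=\prod_{j\ge 0}P(\lambda^j\xi)$. Since $|P|\le 1$, this yields the crucial monotonicity $|\hat{\mu}(\xi)|\le|\hat{\mu}(\lambda^j\xi)|$ for every $j\ge 0$, and therefore, for any $J$, the averaged bound $|\hat{\mu}(\xi)|\le\frac1J\sum_{j=0}^{J-1}|\hat{\mu}(\lambda^j\xi)|$. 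Applying this with $\xi=kb^n(b^d-1)$ spreads each sparse geometric frequency into the two-parameter family $k\lambda^j b^n(b^d-1)$, whose magnitudes have logarithms $\log|k(b^d-1)|+n\log b-j\log(1/|\lambda|)$. The point is that this replaces a single, possibly non-decaying value of $\hat\mu$ by an average over many frequencies.

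Now the two hypotheses enter. Because $\frac{\log b}{\log|\lambda|}\notin\mathbb{Q}$, the numbers $n\log b-j\log(1/|\lambda|)$ are equidistributed on a logarithmic scale (Weyl), so with $J=J(N)\to\infty$ chosen so that $J/N\to0$ the sample frequencies stay at high modulus yet fill a window of logarithmic length tending to infinity. On the other hand $\mu$ is non-atomic, so $|\hat\mu(\xi)|<1$ for all $\xi\ne 0$ and, by Wiener's theorem, $\frac1R\int_0^R|\hat{\mu}(t)|^2\,dt\to 0$; via Cauchy--Schwarz and integration by parts this upgrades to the vanishing of the logarithmic average $\frac{1}{\log R}\int_1^R|\hat{\mu}(t)|\,\frac{dt}{t}\to 0$. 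Comparing the equidistributed logarithmic samples against this vanishing average, the inner averages $\frac{1}{N-d}\sum_{n=1}^{N-d}|\hat{\mu}(kb^n(b^d-1))|$ tend to $0$ for each fixed $d$; splitting the sum over $d$ at $d=(1-\delta)N$ and estimating the remaining $O(\delta N^2)$ terms trivially then gives $\int|S_N|^2\,d\mu\to 0$. I expect the genuinely delicate technical point inside this step to be the comparison itself: since $|\hat{\mu}(e^s)|$ oscillates ever faster as $s\to\infty$ and the equidistribution carries no effective rate (no Diophantine assumption is made), one must balance keeping the window long against avoiding aliasing at the top of the window, so that the sparse geometric averages really are controlled by the honest logarithmic average.

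Finally, the passage from the mean-square statement $\int|S_N|^2\,d\mu\to 0$ to almost-everywhere convergence of $S_N$ along the full sequence is the remaining obstacle. I would invoke the Davenport--Erd\H{o}s--LeVeque criterion, which requires promoting the qualitative decay to summability of $\frac1N\int|S_N|^2\,d\mu$; absent an effective equidistribution rate this is not automatic, so I expect to exploit instead that $S_N$ varies slowly, namely $|S_{N+1}-S_N|\le 2/N$, which lets control along a geometric subsequence (via Borel--Cantelli on the second moments) be filled in across the gaps. In summary, the conceptual heart is the self-similarity monotonicity combined with incommensurability, which converts sparse, possibly non-decaying Fourier frequencies into a vanishing logarithmic average accessible to non-atomicity (Wiener); the two points demanding real care are this conversion and the mean-square-to-almost-everywhere upgrade.
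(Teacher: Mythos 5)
Your scheme has one repairable gap and one fatal one. The repairable gap is the comparison step you yourself flag: the inequality $|\hat{\mu}(\xi)|\le\frac1J\sum_{j=0}^{J-1}|\hat{\mu}(\lambda^j\xi)|$ turns each sparse frequency into samples of $s\mapsto|\hat{\mu}(e^s)|$ along an arithmetic progression in logarithmic scale, and you then want to majorize these equidistributed samples by the vanishing logarithmic average. But equidistribution controls averages only of a \emph{fixed} Riemann-integrable test function, whereas $|\hat{\mu}(e^s)|$ oscillates on scale $e^{-s}$; since no Diophantine assumption is made on $\theta=\log b/\log\lambda^{-1}$, the discrepancy of $(n\theta)$ decays with no rate, while resolving the oscillation at the top of your window would require discrepancy exponentially small in $d$. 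As set up, this is not a delicate balance but a dead end. It can, however, be fixed: renormalize each frequency by a \emph{single} power of $\lambda$ into a window at a fixed large height $q$, giving $|\hat{\mu}(kb^n(b^d-1))|\le G_q(\{n\theta+c_d\})$ with $G_q(t):=|\hat{\mu}(q\lambda^{-t})|$ a fixed continuous function and $c_d$ a phase. Unique ergodicity of $R_\theta$ makes the Birkhoff averages of $G_q$ converge \emph{uniformly in the phase}, and $\int_0^1 G_q(t)^2\,dt\le\frac{1}{rq\log\lambda^{-1}}+\int\mu(B_r(y))\,d\mu(y)$ by exactly the oscillatory-integral computation that is Lemma \ref{Hoc3} of the paper; non-atomicity (choose $r$ small, then $q$ large) makes this small, and the triangular double sum then goes through, yielding $\int|S_N|^2\,d\mu\to0$. (Your Wiener/logarithmic-average detour becomes unnecessary, though that step itself was sound, since Ces\`aro convergence does imply logarithmic convergence.)

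The fatal gap is the passage from $\int|S_N|^2\,d\mu\to0$ to almost-everywhere convergence. Your filling argument via $|S_{N+1}-S_N|\le 2/N$ correctly reduces matters to a.e.\ convergence along a geometric subsequence $N_i\sim(1+\eta)^i$, but Borel--Cantelli there requires $\sum_i\int|S_{N_i}|^2\,d\mu<\infty$, and the argument above delivers decay with \emph{no rate} --- the rate is precisely what unique ergodicity without a Diophantine hypothesis on $\theta$ cannot provide, and the Davenport--Erd\H{o}s--LeVeque criterion needs the same summability. Mere $L^2$-convergence gives a.e.\ convergence only along a further subsequence whose gap ratios are unbounded, where the slow-variation filling fails; so your route proves $S_N\to0$ in $L^2(\mu)$ but cannot reach the a.e.\ statement. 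This is exactly why the paper adopts a different architecture: it first disintegrates $\mu_{\mathbf{p}}=\int\mu_{\bm{\omega}}\,d\mathbb{P}$ into random infinite convolutions whose disjoint cylinder structure makes renormalization dynamical, and then runs Hochman's \emph{pointwise} machinery --- Theorem \ref{Hoc1} (an orbit-to-measure transfer, proved by a martingale law of large numbers) replaces the second-moment computation, and Theorem \ref{Hoc2} (an ergodic-theorem statement for the pair $(n\theta,\sigma^{n'}\bm{\omega})$) supplies the scale equidistribution almost surely; only at the very end does it invoke the same Lemma \ref{Hoc3} plus non-atomicity that your repaired comparison would use. Because every step there is already an almost-everywhere statement, no effective rates are ever needed, which is the essential point your proposal cannot reproduce.
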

We remark that unlike in the statement of Conjecture \ref{Main conjecture}, we do not need the assumption that $\mu_{\p}$ be fully supported in the statement of Theorem \ref{Main theorem}. This is because even if $\mu_{\p}$ is not fully supported, it can still be realised as a fully supported self-similar measure for some other appropriate equicontractive IFS which satisfies $\frac{\log b}{\log |\lambda|}\notin\mathbb{Q}.$ The same is not true in the more general setting of Conjecture \ref{Main conjecture}. 

The following example gives one particular application of Theorem \ref{Main theorem}. We include it because it is of independent interest, and because it details a specific instance when Theorem \ref{Main theorem} applies to self-similar measures that are not Rajchman measures.

\begin{example}
	\label{Bernoulliconvolution}
Let $\lambda\in(1/2,1)$ and $\mu_{\lambda}$ be the distribution of the random sum $\sum_{n=0}^{\infty} \pm \lambda^{n}$ where plus and minus are chosen with equal probability. The probability measure $\mu_{\lambda}$ is known as the Bernoulli convolution. Bernoulli convolutions are a well studied family of measures. They have connections to the theory of algebraic numbers and to problems from Harmonic Analysis. Often we are interested in calculating the dimension of $\mu_{\lambda}$ and determining whether it is absolutely continuous. For a more detailed introduction to Bernoulli convolutions we refer the reader to the articles \cite{PSS,Sol2,Var2,Var} and the references therein. For our purposes, the important point is that $\mu_{\lambda}$ can be realised as the self-similar measure for the iterated function system $\{\varphi_1(x)=\lambda x -1 ,\varphi_{2}(x)=\lambda x +1\}$ and the probability vector $(1/2,1/2)$. 

In \cite{Erdos1} Erd\H{o}s proved that if $\lambda$ is the reciprocal of a Pisot number then $\mu_{\lambda}$ is not a Rajchman measure. This is significant for two reasons. First of all, it implies that $\mu_{\lambda}$ is singular with respect to the Lebesgue measure. Therefore normality results for $\mu_{\lambda}$ typical points cannot be immediately deduced from Borel's theorem. Secondly, it also means that we cannot use the work of Algom et al to establish normality results. As we will now explain, Theorem \ref{Main theorem} overcomes these obstacles and implies that if $\lambda$ is the reciprocal of a Pisot number and $b\geq 2$ is any integer, then $\mu_{\lambda}$ almost every $x$ is normal in base $b$. 

Let us fix $\lambda$ the reciprocal of a Pisot number and $b\geq 2$. By Theorem \ref{Main theorem}, to prove our statement it suffices to show that $\frac{\log b}{\log \lambda}\notin\mathbb{Q}$. For the purpose of obtaining a contradiction, let us suppose $\frac{\log b}{\log \lambda}\in\mathbb{Q}.$ This implies that $b^{-p/q}=\lambda$ for some $p,q\in\mathbb{N}$. Therefore for any $l\in\mathbb{N}$ we have that $\lambda^{-lq}\in\mathbb{N}$. Because $\lambda\in(1/2,1)$ we must have $\lambda^{-1}\in(1,2).$ Since $\lambda^{-1}$ is an algebraic integer, we may deduce that its minimal polynomial has degree at least $2$. This implies that $\lambda^{-1}$ has a non-empty set of Galois conjugates which we denote by $\{\gamma_1,\ldots,\gamma_{d}\}$. Using well known properties of algebraic integers, we know that for any $l\in\mathbb{N}$ we have 
\begin{equation}
\label{Pisot}
\lambda^{-lq}+\gamma_{1}^{lq}+\cdots+ \gamma_{d}^{lq}\in\mathbb{Z}.
\end{equation} Using the fact that each of the Galois conjugates has modulus strictly less than $1,$ it can be shown that there exists infinitely many $l$ for which $0<|\gamma_{1}^{lq}+\cdots+ \gamma_{d}^{lq}|<1.$ This fact together with \eqref{Pisot} contradicts the fact that $\lambda^{-lq}\in\mathbb{N}$ for all $l\in\mathbb{N}$. Therefore we must have $\frac{\log b}{\log \lambda}\notin\mathbb{Q}.$
\end{example}

We conclude this introductory section by surveying some other related works. In \cite{DGW} Dayan, Ganguly, and Weiss proved that if $\{\varphi_i(x)=\frac{x}{b} +t_i\}_{i\in \I}$ is an iterated function system and $t_i-t_{i'}\notin \mathbb{Q}$ for some $i,i'\in \I$, then $\mu_{\p}$ almost every $x$ is normal in base $b$ for every non-atomic fully supported self-similar measure. In \cite{Bak} the author studied powers of real numbers. They gave sufficient conditions for a self-similar measure to ensure that for $\mu_{\p}$ almost every $x$ the sequence $(x^n)_{n=1}^{\infty}$ is uniformly distributed modulo one. For an arbitrary Borel probability measure $\mu$, to prove that $\mu$ almost every $x$ is normal in base $b$ it is sufficient to prove that the Fourier transform of $\mu$ decays to zero sufficiently quickly. This fact follows from a result of Davenport, Erd\H{o}s, and LeVeque \cite{DEL}. The rate at which the Fourier transform of a fractal measure decays to zero is a well studied problem. For more on this topic we refer the reader to the papers \cite{ARHW,Bre,JorSah,Kau,LS,QR,SS,Sol,VY} and the references therein.

\section{Proof of Theorem \ref{Main theorem}}
For the rest of this paper we fix an equicontractive IFS $\{\varphi_i(x)=\lambda x +t_i\}_{i\in \I}$ and an integer $b\geq 2$ such that $\frac{\log b}{\log |\lambda|}\notin\mathbb{Q}.$ We now explain several assumptions that we can make without any loss of generality. These assumptions will help to simplify our proof. By considering the IFS $\{\varphi_{i}\circ \varphi_{i'}\}_{i,i'\in \I}$ if necessary, we can assume without loss of generality that $\lambda\in(0,1)$. Let us now also fix a non-atomic self-similar measure $\mu_{\p}$. As previously explained, we can assume without loss of generality that $\mu_{\p}$ is fully supported. It follows from the assumption that $\mu_{\p}$ is non-atomic, that for $M$ sufficiently large there exists  $(i_1,\ldots,i_M), (i_1',\ldots,i_M') \in \I^M$ such that 
\begin{equation}
\label{Memptyintersection}
\textrm{Conv}\left((\varphi_{i_1}\circ \cdots \circ \varphi_{i_M})(X)\right)\cap \textrm{Conv}((\varphi_{i_1'}\circ \cdots \circ \varphi_{i_M'})(X))=\emptyset.
\end{equation}
 Here and throughout $\textrm{Conv}(\cdot)$ is used to denote the convex hull of a set. Since each self-similar measure of $\{\varphi_i\}_{i\in \I}$ can be realised as a self-similar measure for $\{\varphi_{i_1}\circ \cdots \circ \varphi_{i_M}\}_{(i_1,\ldots,i_M)\in \I^M}$, it follows from \eqref{Memptyintersection} that without loss of generality we can assume that there exists $i',i''\in \I$ such that 
\begin{equation}
\label{emptyintersection}
\textrm{Conv}(\varphi_{i'}(X))\cap \textrm{Conv}(\varphi_{i''}(X))=\emptyset.
\end{equation} Last of all, since the property of being normal in base $b$ is preserved by integer translations and multiplication by $\frac{1}{b}$, without loss of generality we can assume that $X\subset [0,1)$. This final assumption will allows us to express our proof in terms of dynamics on the torus $\mathbb{R}/\mathbb{Z}$.

We now set out to prove that $\mu_{\p}$ almost every $x$ is normal in base $b$. Our proof is split into two parts. In the first part we show that it is possible to express $\mu_{\p}$ as the integral of some random probability measures. These random measures will resemble self-similar measures for iterated function systems satisfying the strong separation condition. This property means that blowing up small pieces of these measures can be interpreted dynamically in terms of the full shift on an appropriate sequence space. In the second part we use this observation together with a dynamical argument of Hochman to complete our proof.

% of the proof we will show that almost surely, one of these random measures assigns full mass to the set of points that are normal in base $b$. This implies our theorem. To prove that one of these random measures almost surely assigns full mass to the set of points that are normal in base $b$ we will apply the argument of Hochman from \cite{Hoc}.

\subsection{Disintegrating $\mu_{\p}$.}
Our method to disintegrate $\mu_{\p}$ is based upon a technique that first appeared in \cite{GSSY}, and was subsequently applied in \cite{KaeOrp} and \cite{SSS}. In these papers the authors used this technique to express an arbitrary self-similar measure as the integral of a collection of random measures. What was important for these authors was that these random measures could be expressed as an infinite convolution. Using the fact that our IFS is equicontractive, it can be shown that any of its self-similar measures automatically have this infinite convolution structure. For our purposes, the important difference is that use this technique to express $\mu_{\p}$ as the integral of a collection of random measures which each resembles a self-similar measure for an IFS satisfying the strong separation condition. 

It is useful at this point to establish some notation. In what follows we will let $\mu \ast \nu$ denote the convolution of two Borel probability measures on $\mathbb{R}$. Given $t\in \mathbb{R}$ we let $S_{t}:\mathbb{R}\to\mathbb{R}$ denote the map given by $S_{t}(x)=tx$. Moreover for $\mu$ a Borel probability measure we let $S_{t}\mu$ denote its pushforward under $S_{t}$.

Let $i',i''\in \I$ be as in \eqref{emptyintersection}. Let $$\Omega:=\left\{\{i',i''\}\right\}\bigcup_{\stackrel{i\in \I}{i\neq i',i\neq i''}}\left\{\{i\}\right\}.$$ We now define a probability vector $(q_{\omega})_{\omega\in \Omega}$ according to the rules $$q_{\omega}=p_{i'}+p_{i''} \textrm{ if }\omega=\{i',i''\}$$ and $$q_{\omega}=p_{i}\textrm{ if }\omega=\{i\} \textrm{ for }i \textrm{ such that }i\neq i'\textrm{ and }i\neq i''.$$ We denote the infinite product measure on $\Omega^{\mathbb{N}}$ corresponding to the probability vector $(q_{\omega})_{\omega\in \Omega}$ by $\mathbb{P}$.

Given $\omega\in \Omega$ we let $$[\omega]:=\left\{(\omega_n)_{n=0}^{\infty}\in \Omega^{\mathbb{N}}:\omega_0=\omega\right\}.$$ To any $\bm{\omega}=(\omega_n)_{n=0}^{\infty}\in \Omega^{\mathbb{N}}$ we associate the set $$X_{\om}:=\left\{\sum_{n=0}^{\infty}t_n\lambda^{n}:t_n\in \{t_i\}_{i\in\omega_n}\, \forall n\geq 0 \right\}.$$ Given $\om\in \Omega^{\mathbb{N}}$ and a finite word $(i_n)_{n=0}^{m}\in \I^{m+1}$ satisfying $i_{n}\in \omega_{n}$ for each $0\leq n\leq m,$ we associate the cylinder set $$X_{\om}\left((i_n)_{n=0}^{m})\right):=\left\{\sum_{n=0}^{\infty}t_n\lambda^{n}:t_n=t_{i_n} \textrm{ for }0\leq n\leq m \textrm{ and }t_n\in \{t_i\}_{i\in \omega_n}\, \forall n\geq m+1 \right\}.$$ Notice that for each $m\in\mathbb{N}$ we have the relation $$X_{\om}=\bigcup_{\stackrel{(i_n)_{n=0}^{m}\in \I^{m+1}}{i_n\in \omega_n\textrm{ for }0\leq n\leq m}}X_{\om}((i_n)_{n=0}^{m})).$$ Alternatively, we can rewrite this as 
\begin{equation}
\label{quasi self-similar}
X_{\om}=\bigcup_{\stackrel{(i_n)_{n=0}^{m}\in \I^{m+1}}{i_n\in \omega_n\textrm{ for }0\leq n\leq m}}(\varphi_{i_0}\circ \cdots \circ \varphi_{i_m})(X_{\sigma^{m+1}(\om)}).
\end{equation} Here $\sigma$ is the left shift on $\Omega^{\mathbb{N}}$. Equation \eqref{quasi self-similar} demonstrates that the set $X_{\om}$ satisfies a type of dynamical self-similarity analogous to \eqref{selfsimilarset}. We emphasise that the union in \eqref{quasi self-similar} is disjoint. This follows from a simple induction argument and the fact that any digit $\omega_n$ is either a single element set or $\omega_n=\{i',i''\}$ where $i'$ and $i''$ are as in \eqref{emptyintersection}.

To each $\om\in \Omega^{\mathbb{N}}$ we associate a probability measure $\mu_{\om}$ supported on $X_{\om}$ as follows 
$$\mu_{\om}:=\ast_{n=0}^{\infty}\sum_{i\in \omega_n}\frac{p_i}{q_{\omega_n}}\delta_{t_i\cdot \lambda^n}.$$ Alternatively, $\mu_{\om}$ can be interpreted as the law of the random sum $\sum_{n=0}^{\infty}t_n\lambda^{n}$ where for each $n$ the parameter $t_n$ is chosen from $\{t_i\}_{i\in\omega_n}$ with probabilities determined by the probability vector $(\frac{p_i}{q_{\omega_n}})_{i\in \omega_n}$. The following proposition describes the key properties of $\mu_{\om}$ that we will need in our proof.

\begin{prop}
\label{technical prop}
The following properties hold:
\begin{enumerate}
	\item $\mu_{\p}=\int \mu_{\om}\, d\mathbb{P}.$
	\item For $\mathbb{P}$ almost every $\om$ the measure $\mu_{\om}$ is non-atomic.
	\item For any $\om$ and finite word $(i_n)_{n=0}^{m}\in \I^{m+1}$ satisfying $i_n\in \omega_n$ for all $0\leq n\leq m$, we have $$\frac{\mu_{\om}|_{X_{\om}((i_n))}}{\mu_{\om}(X_{\om}((i_n)))}=(\varphi_{i_0}\circ \cdots \circ \varphi_{i_m})(\mu_{\sigma^{m+1}(\om)}).$$
\end{enumerate}
\end{prop}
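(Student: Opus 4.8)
The plan is to regard both $\mu_{\p}$ and each $\mu_{\om}$ as the laws of the random sums $\sum_{n=0}^{\infty}t_n\lambda^n$, and to exploit throughout the fact that the union in \eqref{quasi self-similar} is disjoint, so that $X_{\om}$ enjoys a strong separation property. Write $\nu_n:=\sum_{i\in\omega_n}(p_i/q_{\omega_n})\delta_{t_i\lambda^n}$, so that $\mu_{\om}=\ast_{n=0}^{\infty}\nu_n$. For assertion (1) I would argue by marginalisation: sampling a point from $\int\mu_{\om}\,d\mathbb{P}$ amounts to first drawing $\om=(\omega_n)$ with the $\omega_n$ i.i.d. according to $(q_{\omega})_{\omega\in\Omega}$, and then, independently in each coordinate $n$, selecting $i\in\omega_n$ with probability $p_i/q_{\omega_n}$. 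Since every $i\in\I$ lies in exactly one $\omega\in\Omega$, the unconditional probability of selecting $i$ in coordinate $n$ is $q_{\omega}\cdot(p_i/q_{\omega})=p_i$, so the resulting digit sequence is i.i.d. with law $\p$ and the random sum has law $\mu_{\p}$. To make this rigorous at the level of measures I would pass to Fourier transforms: using $\widehat{\mu_{\om}}=\prod_{n=0}^{\infty}\widehat{\nu_n}$ with each factor bounded by $1$ in modulus, I can interchange $\int\cdot\,d\mathbb{P}$ with the infinite product by applying dominated convergence to the finite truncations; integrating coordinatewise collapses the result to $\prod_{n=0}^{\infty}\sum_{i\in\I}p_i\,\widehat{\delta_{t_i\lambda^n}}=\widehat{\mu_{\p}}$, and uniqueness of Fourier transforms yields (1).

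For assertion (3) I would split the convolution as $\mu_{\om}=\big(\ast_{n=0}^{m}\nu_n\big)\ast\big(\ast_{n=m+1}^{\infty}\nu_n\big)$. The head $\ast_{n=0}^{m}\nu_n$ is finitely supported and places mass $\prod_{n=0}^{m}(p_{i_n}/q_{\omega_n})$ at the point $c:=\sum_{n=0}^{m}t_{i_n}\lambda^n$. Because distinct level-$m$ prefixes produce disjoint cylinders, restricting $\mu_{\om}$ to $X_{\om}((i_n))$ retains exactly the contribution of this single atom; hence $\mu_{\om}(X_{\om}((i_n)))=\prod_{n=0}^{m}(p_{i_n}/q_{\omega_n})$ and the normalised restriction equals $\delta_c\ast\big(\ast_{n=m+1}^{\infty}\nu_n\big)$. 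A reindexing of the powers of $\lambda$ identifies the tail as $S_{\lambda^{m+1}}\mu_{\sigma^{m+1}(\om)}$, while a direct computation gives $\varphi_{i_0}\circ\cdots\circ\varphi_{i_m}(x)=\lambda^{m+1}x+c$. Pushing $\mu_{\sigma^{m+1}(\om)}$ forward by this affine map is precisely $\delta_c\ast S_{\lambda^{m+1}}\mu_{\sigma^{m+1}(\om)}$, so the two sides agree and (3) follows.

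Assertion (2) I would deduce from the same separation. Because the cylinders are disjoint and their diameters tend to $0$, each $x\in X_{\om}$ has a unique code $(i_n)$ with $i_n\in\omega_n$, and the mass formula from (3) gives $\mu_{\om}(\{x\})=\prod_{n=0}^{\infty}(p_{i_n}/q_{\omega_n})$. For every $n$ with $\omega_n$ a singleton this factor is $1$, so only the coordinates in $N(\om):=\{n:\omega_n=\{i',i''\}\}$ contribute, each by a factor at most $c_0:=\max\{p_{i'},p_{i''}\}/(p_{i'}+p_{i''})<1$. Full support of $\mu_{\p}$ gives $p_{i'},p_{i''}>0$, so $\mathbb{P}(\omega_n=\{i',i''\})=p_{i'}+p_{i''}>0$ for each $n$; since the coordinates are independent, the second Borel--Cantelli lemma shows that $N(\om)$ is infinite for $\mathbb{P}$-almost every $\om$. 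For such $\om$ the bound $\prod_{n\in N(\om)}c_0=0$ dominates the mass of every point, so $\mu_{\om}$ is non-atomic.

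The manipulations of convolutions and affine maps carry no real difficulty; the one point demanding care is the interchange of the integral against $\mathbb{P}$ with the infinite convolution in (1), which is why I would route that step through Fourier transforms and truncation. Everything else rests on the disjointness in \eqref{quasi self-similar}, the structural feature that lets a general equicontractive self-similar measure be decomposed into pieces behaving like strongly separated self-similar measures.
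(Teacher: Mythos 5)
Your proposal is correct. Items (2) and (3) are essentially the paper's own arguments: for (3) the paper likewise identifies the normalised restriction with the law of the random sum $\sum_{n}t_n\lambda^n$ with the prefix frozen at $(t_{i_0},\ldots,t_{i_m})$, resting on the disjointness of the union in \eqref{quasi self-similar} exactly as you do (your head/tail convolution splitting is just a more explicit rendering of this), and for (2) the paper proves the same cylinder-mass bound with the same constant $\max\left\{\frac{p_{i'}}{p_{i'}+p_{i''}},\frac{p_{i''}}{p_{i'}+p_{i''}}\right\}$ raised to the number of occurrences of $\{i',i''\}$, then uses that this digit occurs infinitely often almost surely; you compute exact atom masses via unique coding where the paper only needs the upper bound, a cosmetic difference. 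The genuine divergence is in (1). The paper does not touch Fourier transforms there: it shows by a direct computation --- splitting off the zeroth coordinate of $\mu_{\om}$, decomposing the integral over the cylinders $[\omega_0]$, and using that $\mathbb{P}$ is a product measure so that integrating $\sigma(\om)$ over $[\omega_0]$ reproduces $\mathbb{P}$ --- that $\int\mu_{\om}\,d\mathbb{P}$ satisfies $\mu=\sum_{i\in\I}p_i\cdot\varphi_i\mu$, and then invokes uniqueness of the self-similar measure. Your route instead integrates $\widehat{\mu_{\om}}=\prod_{n}\widehat{\nu_n}$ coordinatewise (independence of the $\omega_n$ plus dominated convergence on truncations, both legitimate since each factor has modulus at most $1$ and the tails converge uniformly) and invokes uniqueness of characteristic functions; this is valid, but note that it additionally uses the identity $\widehat{\mu_{\p}}(\xi)=\prod_{n=0}^{\infty}\sum_{i\in\I}p_i\,e^{2\pi i\xi t_i\lambda^n}$, i.e. that $\mu_{\p}$ is itself the infinite convolution $\ast_{n=0}^{\infty}\sum_{i\in\I}p_i\delta_{t_i\lambda^n}$. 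That fact holds precisely because the IFS is equicontractive (push forward the Bernoulli measure $\p^{\mathbb{N}}$ under the coding map, or check the fixed-point equation for the infinite convolution), and the paper alludes to this structure, but in your write-up it silently does the same work that uniqueness of the fixed point does in the paper's proof, so it deserves a line of justification rather than being absorbed into notation. In exchange, your Fourier route spares you the conditional-measure bookkeeping over the cylinders $[\omega_0]$, while the paper's computation needs nothing beyond the defining equation and the one-step splitting $\mu_{\om}=\sum_{i\in\omega_0}\frac{p_i}{q_{\omega_0}}\,\delta_{t_i}\ast S_{\lambda}\mu_{\sigma(\om)}$.
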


\begin{proof}
We will prove each item in turn. The proof of item $1$ can be found in any of \cite{GSSY,KaeOrp,SSS}. We include the details for completion. We start by showing that $\int \mu_{\om}\, d\mathbb{P}$ satisfies the equation $\mu=\sum_{i\in \I}p_{i}\cdot \varphi_{i}\mu:$
\begin{align}
\label{Bernoullisplit}
\int \mu_{\om}\, d\mathbb{P}=\int \sum_{i\in \omega_0}\frac{p_i}{q_{\omega_0}}\cdot \delta_{t_i}\ast S_{\lambda}\mu_{\sigma(\om)}d\mathbb{P}&=\int \sum_{i\in \omega_0}\frac{p_i}{q_{\omega_0}}\cdot \varphi_{i}\mu_{\sigma(\om)}d\mathbb{P}\nonumber\\
&=\sum_{\omega_0\in\Omega}\int_{[\omega_0]}\sum_{i\in \omega_0}\frac{p_i}{q_{\omega_0}}\cdot \varphi_{i}\mu_{\sigma(\om)}d\mathbb{P}\nonumber\\
&=\sum_{\omega_0\in\Omega}\sum_{i\in \omega_0}\frac{p_i}{q_{\omega_0}}\int_{[\omega_0]} \varphi_{i}\mu_{\sigma(\om)}d\mathbb{P}\nonumber\\
&=\sum_{\omega_0\in\Omega}\sum_{i\in \omega_0}\frac{p_i}{q_{\omega_0}}\cdot \left(q_{\omega_0}\int \varphi_{i}\mu_{\om}d\mathbb{P}\right)\\
&=\sum_{\omega\in\Omega}\sum_{i\in \omega_0}p_i\int_{\Omega^{\mathbb{N}}} \varphi_{i}\mu_{\om}d\mathbb{P}\nonumber\\
&=\sum_{i\in \I}p_i \int \varphi_{i}\mu_{\om}d\mathbb{P}.\nonumber
\end{align}In line \eqref{Bernoullisplit} we used the fact that $\mathbb{P}$ is a product measure. We have shown that the probability measure $\int \mu_{\om}\, d\mathbb{P}$ satisfies the equation $\mu=\sum_{i\in \I}p_{i}\cdot \varphi_{i}\mu.$ The self-similar measure $\mu_{\p}$ is the unique probability measure satisfying this equation. Therefore $\mu_{\p}=\int \mu_{\om}\, d\mathbb{P}.$
\vspace{3mm}

We now move on to our proof of item $2$. We begin by remarking that for any $\om\in \Omega^{\mathbb{N}}$ and finite word $(i_n)_{n=0}^{m}$ satisfying $i_n\in \omega_n$ for $0\leq n\leq m,$ we have 
\begin{equation}
\label{nonatomic}
\mu_{\om}\left(X_{\om}((i_n)_{n=0}^{m})\right)\leq \left(\max\left\{\frac{p_{i'}}{p_{i'}+p_{i''}},\frac{p_{i''}}{p_{i'}+p_{i''}}\right\}\right)^{\#\{0\leq n\leq m:\omega_n=\{i',i''\}\}}.
\end{equation} For $\mathbb{P}$ almost every $\om$ the digit $\{i',i''\}$ occurs infinitely many times. Therefore the right hand side of \eqref{nonatomic} converges to $0$ as $m\to\infty$ for $\mathbb{P}$ almost every $\om$. Therefore for $\mathbb{P}$ almost every $\om,$ the $\mu_{\om}$ measure of a cylinder set $X_{\om}((i_n)_{n=0}^{m})$ converges uniformly to zero as $m\to\infty$. This implies that $\mu_{\om}$ is almost surely non-atomic.
\vspace{3mm}

We now focus on item $3.$ We remark that because the union in \eqref{quasi self-similar} is disjoint, the measure $\frac{\mu_{\om}|_{X_{\om}((i_n)_{n=0}^m)}}{\mu_{\om}(X_{\om}((i_n)_{n=0}^m))}$ is the law of the random sum $\sum_{n=0}^{\infty}t_n\lambda^n$ where $t_n=t_{i_n}$ for $0\leq n\leq m,$ and for $n\geq m+1$ each $t_n$ is chosen from $\{t_i\}_{i\in \omega_n}$ according to the law determined by the probability vector $(p_{i}/{q_{w_n}})_{i\in \omega_n}.$ This is precisely the pushforward of $\mu_{\sigma^{m+1}\om}$ by $\varphi_{i_0}\circ \cdots \circ \varphi_{i_m}$. This completes our proof.
\end{proof}

\subsection{Applying Hochman's argument}
To complete our proof of Theorem \ref{Main theorem} we apply an argument due to Hochman from \cite{Hoc}. Before recalling some results from this paper, it is useful to introduce some notation. For any $l\in\mathbb{R}$ we let $e_{l}(x)$ denote $e^{2\pi i lx}.$ Given a Borel probability measure $\mu$ we let $$\mathcal{F}_{l}(\mu):=\int e_{l}(x)\, d\mu(x).$$ We let $T_{b}:\mathbb{R}/\mathbb{Z}\to \mathbb{R}/\mathbb{Z}$ be given by $T_{b}(x)=bx\mod 1$. Given $(X,\mathcal{B},\mu)$ a probability space and $\A$ a measurable partition of $X$, for $x\in X$ we let $\A(x)$ denote the unique element of $\A$ containing $x$. Given $A\in \A$ for which $\mu(A)>0,$ we let $\mu_{A}$ denote the normalised restriction of $\mu$ to $A$, i.e. $\mu_{A}:=\frac{\mu|_{A}}{\mu(A)}.$ 

Let $$\theta:=-\frac{\log b}{\log \lambda}.$$ By our hypothesis we know that $\theta$ is irrational. We let $R_{\theta}:\mathbb{R}/\mathbb{Z}\to \mathbb{R}/\mathbb{Z}$ be given by $R_{\theta}(x)=x+\theta\mod 1.$ For each $n\in\mathbb{N}$ we let $$n'=\lfloor \theta n\rfloor.$$ Our parameter $n'$ has the property that $$b^n\lambda^{n'}=\lambda^{-\theta n}\lambda^{\lfloor \theta n\rfloor}=\lambda^{-R_{\theta}^n0}.$$

%For $\theta\in\mathbb{R}$ we let $R_{\theta}:\mathbb{R}/\mathbb{Z}\to \mathbb{R}/\mathbb{Z}$ be given by $R_{\theta}(x)=x+\theta\mod 1.$

The following three statements are taken from \cite{Hoc}.

\begin{thm}{\cite[Theorem 2.2]{Hoc}}
	\label{Hoc1}
Let $T:X\to X$ be a continuous map of a compact metric space. Let $\A_1,\A_2,\A_3,\ldots$ be a refining sequence of Borel partitions. Let $\mu$ be a Borel probability measure on $X$ and assume that
$$\sup_{n\in\mathbb{N}}\{Diam (T^n(A)): A\in \A_{n+k},\mu(A)>0\}\to 0\qquad \textrm{ as } k\to\infty.$$ Then for $\mu$ almost every $x$, $$\lim_{N\to\infty}\left(\frac{1}{N}\sum_{n=1}^{N}\delta_{T^nx}-\frac{1}{N}\sum_{n=1}^{N}T^n\mu_{\A_{n}(x)}\right)=0$$ in the weak-* sense.
\end{thm}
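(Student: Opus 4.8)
The plan is to test the difference of signed measures against a fixed continuous function and reduce the entire statement to an almost sure law of large numbers for a suitable near-martingale. Since $X$ is compact metric, $C(X)$ is separable, so it suffices to fix $f$ in a countable dense subset of $C(X)$ and prove that $\frac1N\sum_{n=1}^N f(T^nx) - \frac1N\sum_{n=1}^N \int f\, d(T^n\mu_{\A_n(x)}) \to 0$ for $\mu$-almost every $x$; a countable intersection of full measure sets then yields the weak-* convergence almost everywhere. Writing $\B_n:=\sigma(\A_n)$ for the increasing sequence of $\sigma$-algebras generated by the refining partitions, one checks directly that $\int f\, d(T^n\mu_{\A_n(x)})=\mathbb{E}[f\circ T^n\mid\B_n](x)$, so the quantity to control is the Cesàro average of
$$g_n(x):=f(T^nx)-\mathbb{E}[f\circ T^n\mid\B_n](x),$$
a sequence satisfying $\mathbb{E}[g_n\mid\B_n]=0$.

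The key device is to interpolate with an intermediate level of the filtration. For a fixed $k\in\N$ I would split $g_n=\big(f\circ T^n-\mathbb{E}[f\circ T^n\mid\B_{n+k}]\big)+g_n^{(k)}$, where $g_n^{(k)}:=\mathbb{E}[f\circ T^n\mid\B_{n+k}]-\mathbb{E}[f\circ T^n\mid\B_n]$. The first bracket is controlled pointwise by the diameter hypothesis: on an atom $A\in\A_{n+k}$ the conditional expectation of $f\circ T^n$ is an average of the values of $f$ over $T^n(A)$, and since $\mathrm{Diam}(T^n(A))$ is uniformly small once $k$ is large, $f\circ T^n$ differs from its $\B_{n+k}$-average by at most $\omega_f(\varepsilon_k)$, where $\omega_f$ is the modulus of continuity of $f$ and $\varepsilon_k:=\sup_n\{\mathrm{Diam}(T^n(A)):A\in\A_{n+k},\ \mu(A)>0\}\to 0$. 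Hence the Cesàro average of the first bracket is bounded in absolute value by $\omega_f(\varepsilon_k)$, uniformly in $N$.

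It remains to show that for each fixed $k$ the Cesàro average of $g_n^{(k)}$ tends to $0$ almost surely. Here I would exploit that $g_n^{(k)}$ is $\B_{n+k}$-measurable with $\mathbb{E}[g_n^{(k)}\mid\B_n]=0$: consequently, whenever $m\ge n+k$ one obtains $\mathbb{E}[g_n^{(k)}g_m^{(k)}]=0$, so terms at distance at least $k$ are orthogonal. Grouping the indices into the $k$ residue classes modulo $k$ turns each class into a genuine $L^2$-bounded martingale difference sequence (the terms are uniformly bounded by $2\|f\|_\infty$), to which the martingale strong law applies since $\sum_j\|g^{(k)}_{r+jk}\|_2^2/j^2<\infty$. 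Summing the $k$ classes gives $\frac1N\sum_{n=1}^N g_n^{(k)}\to 0$ almost surely for each fixed $k$. Combining the two parts, $\limsup_N\big|\frac1N\sum_{n=1}^N g_n\big|\le\omega_f(\varepsilon_k)$ almost surely for every $k$, and letting $k\to\infty$ along a countable set finishes the argument. I expect the main obstacle to be the bookkeeping in this last step---verifying the martingale-difference property for each residue class against the correct filtration and assembling the pointwise bounds into a single almost sure statement valid simultaneously for all $k$ and all $f$ in the dense family---rather than any single hard estimate.
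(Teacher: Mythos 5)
Your proposal is correct and is essentially the argument Hochman gives for this result (the paper only cites \cite[Theorem 2.2]{Hoc} without reproving it): the same interpolation $f\circ T^n-\mathbb{E}[f\circ T^n\mid\B_{n+k}]$ plus $\mathbb{E}[f\circ T^n\mid\B_{n+k}]-\mathbb{E}[f\circ T^n\mid\B_n]$, with the first term bounded by the modulus of continuity via the diameter hypothesis and the second handled by a law of large numbers for bounded sequences orthogonal at distance $k$, proved exactly as you do by splitting into residue classes modulo $k$. Your use of the martingale strong law per residue class in place of Hochman's orthogonal-sequence SLLN is an inessential variant of the same step, and the reduction to a countable dense family of test functions matches as well.
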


\begin{thm}{\cite[Corollary 2.7]{Hoc}}
	\label{Hoc2}
Let $(X,\mu,T)$ be an ergodic measure preserving system on a compact metric space. Let $\beta>0$ and $\theta\neq 0$. Then for $\mu$ almost every $x$ the sequence $(n\theta,T^{\lfloor\beta n \rfloor}x)$ equidistributes for a measure $\nu_{x}$ on $[0,1)\times X$ that satisfies $\int \nu_{x}d\mu(x)=\tau \times \mu,$ where $\tau$ is the invariant measure on $(\mathbb{R}/\mathbb{Z},R_{\theta})$ supported on the orbit closure of $0$.
\end{thm}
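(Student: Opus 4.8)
The plan is to test weak-$*$ convergence of the empirical measures $\frac{1}{N}\sum_{n=1}^{N}\delta_{(n\theta,\,T^{\lfloor\beta n\rfloor}x)}$ against a convergence-determining family. Since $\mathbb{R}/\mathbb{Z}\times X$ is compact metrisable, it suffices by Stone--Weierstrass to treat products $e_k\otimes g$, where $e_k(u)=e^{2\pi i k u}$ for $k\in\mathbb{Z}$ and $g\in C(X)$; so the statement reduces to understanding the twisted subsampled averages $A_N(x):=\frac{1}{N}\sum_{n=1}^{N}e^{2\pi i k n\theta}\,g(T^{\lfloor\beta n\rfloor}x)$. I would first dispose of the identification of $\int\nu_x\,d\mu$, which is soft. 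Granting that $A_N(x)\to L_k(x)$ for $\mu$-a.e.\ $x$, Fubini and the $T$-invariance of $\mu$ give $\int A_N\,d\mu=\big(\frac{1}{N}\sum_{n=1}^{N}e^{2\pi i k n\theta}\big)\int g\,d\mu$, and the bracketed Weyl sum tends to $\int e_k\,d\tau$ because $0$ is generic for the rotation $R_\theta$ on the orbit closure carrying $\tau$. Bounded convergence then yields $\int L_k\,d\mu=\int e_k\,d\tau\cdot\int g\,d\mu$, which is exactly the pairing of $\tau\times\mu$ with $e_k\otimes g$; hence $\int\nu_x\,d\mu=\tau\times\mu$ once the $\nu_x$ are known to exist.

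All the difficulty therefore lies in the pointwise existence of $L_k(x)$, that is, the a.e.\ convergence of a Birkhoff average that is both twisted by the rotation weight $e^{2\pi i k n\theta}$ and subsampled along the Beatty sequence $\lfloor\beta n\rfloor$. I would encode both features in a single skew product on $\mathbb{R}/\mathbb{Z}\times\mathbb{R}/\mathbb{Z}\times X$, namely $S(u,v,y)=(u+\theta,\,v+\beta,\,T^{\lfloor v+\beta\rfloor}y)$, so that $S^n(0,0,x)=(n\theta,\,n\beta,\,T^{\lfloor n\beta\rfloor}x)$, with the first two coordinates taken modulo $1$, and $A_N(x)=\frac{1}{N}\sum_{n=1}^{N}\Psi(S^n(0,0,x))$ for $\Psi(u,v,y)=e^{2\pi i k u}g(y)$. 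Here $S$ fibres over the torus rotation $R_\theta\times R_\beta$, which is uniquely ergodic on the orbit closure $K$ of $(0,0)$, and $S$ preserves $\mathrm{Haar}_{K}\times\mu$ since each fibre map $T^{\lfloor v+\beta\rfloor}$ preserves $\mu$. The pointwise ergodic theorem on $(K\times X,\mathrm{Haar}_{K}\times\mu,S)$ gives convergence of $\frac{1}{N}\sum\Psi\circ S^n$ from almost every starting point; but the orbit we care about has its first two coordinates pinned at $0$, a null set for the base.

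Removing this non-genericity is the crux, and it is precisely where unique ergodicity in the base must be combined with an almost-everywhere statement in the fibre. I would route this through a Wiener--Wintner argument for the $\beta$-skew product $U(v,y)=(v+\beta,\,T^{\lfloor v+\beta\rfloor}y)$ on $(\mathbb{R}/\mathbb{Z}\times X,\mathrm{Leb}\times\mu)$: Wiener--Wintner applied to the observable $g$ furnishes, for $\mathrm{Leb}\times\mu$-a.e.\ $(v,x)$, convergence of $\frac{1}{N}\sum_{n=1}^{N}e^{2\pi i n t}g(T^{\lfloor v+n\beta\rfloor}x)$ simultaneously for all $t$, in particular for $t=k\theta$. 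The remaining and genuinely delicate step is to pass from Lebesgue-almost-every phase $v$ to the single pinned phase $v=0$; for this I would exploit that replacing $v$ by $0$ changes the exponent only by the bounded cocycle $\lfloor v+n\beta\rfloor-\lfloor n\beta\rfloor\in\{0,1\}$, and that $R_\beta$ is uniquely ergodic, running a continuity/uniformity argument in the base phase to select the orbit of $0$. One must separately treat the arithmetic cases where $\theta$ or $\beta$ is rational: then $\tau$ is finitely supported, $K$ is a proper subtorus, and the $\nu_x$ genuinely depend on $x$ through a splitting of the orbit into residue classes, while the averaged identity $\int\nu_x\,d\mu=\tau\times\mu$ persists by the computation above. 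I expect this transfer from almost-every phase to the pinned phase $v=0$ to be the main obstacle of the whole argument.
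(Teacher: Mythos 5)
The paper contains no proof of this statement at all: it is imported verbatim as \cite[Corollary 2.7]{Hoc}, so the only meaningful benchmark is Hochman's own argument, and your outline is essentially that argument. Like Hochman, you reduce by Stone--Weierstrass to the twisted, subsampled averages $\frac{1}{N}\sum_{n=1}^{N}e^{2\pi i k n\theta}g(T^{\lfloor \beta n\rfloor}x)$, realise the $\lfloor\beta n\rfloor$-sampling as the fibre of a skew product over the rotation by $\beta$ (equivalently, the time-$\beta$ map of the unit-height suspension), invoke Wiener--Wintner to get simultaneous-in-$t$ convergence for $\mathrm{Leb}\times\mu$-a.e.\ phase--point pair $(v,x)$, and then transfer from almost every phase to the pinned phase $v=0$; your soft identification of $\int \nu_x\,d\mu=\tau\times\mu$ via Fubini, $T$-invariance, and genericity of $0$ for $R_\theta$ is also the correct computation, valid for rational and irrational $\theta$ alike.

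Two points deserve sharpening. First, the triple skew product $S$ on $\mathbb{R}/\mathbb{Z}\times\mathbb{R}/\mathbb{Z}\times X$ and the ergodic theorem on $K\times X$ are dispensable scaffolding: the weight $e^{2\pi i k n\theta}$ does not depend on the phase $v$, so the Wiener--Wintner step on $U(v,y)=(v+\beta,\,T^{\lfloor v+\beta\rfloor}y)$ with $t=k\theta$ subsumes the $u$-coordinate entirely, as you in effect concede by switching to it. Second, and more importantly, in the pinning step the boundedness of the discrepancy $\lfloor v+n\beta\rfloor-\lfloor n\beta\rfloor\in\{0,1\}$ is by itself worthless (it only yields the trivial error $2\|g\|_\infty$); what makes the transfer work is the \emph{frequency} estimate: the set of bad times $\{n\le N:\lfloor v+n\beta\rfloor\neq\lfloor n\beta\rfloor\}=\{n\le N:\{n\beta\}\ge 1-v\}$ has asymptotic density $v$ when $\beta$ is irrational (Weyl equidistribution, sandwiching the indicator of $[1-v,1)$ by continuous functions), so the averages at phase $v$ and phase $0$ differ by at most $2\|g\|_\infty v+o_N(1)$. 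Fubini then supplies, for $\mu$-a.e.\ $x$, good phases $v_j\downarrow 0$, and comparing $\limsup$ and $\liminf$ of the pinned averages against the limit at phase $v_j$ gives $\limsup-\liminf\le 4\|g\|_\infty v_j\to 0$, hence convergence at $v=0$. When $\beta=p/q$ is rational the transfer is even easier: for $v<1/q$ the interval $[1-v,1)$ misses the finite set of values of $\{n\beta\}$, so the pinned orbit coincides exactly with the phase-$v$ orbit. With these details made explicit, together with the routine use of a countable dense family of $g\in C(X)$ and all $k\in\mathbb{Z}$ on a single conull set to assemble $\nu_x$, your outline closes to a complete proof along Hochman's lines; the obstacle you flag as the crux is real, but the two ingredients you name are exactly the ones that resolve it.
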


\begin{lemma}{\cite[Lemma 3.2]{Hoc}}
	\label{Hoc3}
Let $\nu$ be a Borel probability measure on $\mathbb{R}$ and $\lambda\in(0,1)$. Then for every $r>0$ and $l\neq 0$,
$$\int_{0}^{1}|\mathcal{F}_{l}(S_{\lambda^{-t}}\nu)|^2\, dt\leq \frac{1}{r\cdot |l|\cdot \log \lambda^{-1}}+\int \nu(B_r(y))\,d\nu(y).$$
\end{lemma}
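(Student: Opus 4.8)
The plan is to expand the square $|\mathcal{F}_{l}(S_{\lambda^{-t}}\nu)|^2$ as a double integral against $\nu\times\nu$ and to integrate in $t$ first. Writing $\mathcal{F}_{l}(S_{\lambda^{-t}}\nu)=\int e^{2\pi i l\lambda^{-t}y}\,d\nu(y)$ and multiplying by its complex conjugate gives
$$|\mathcal{F}_{l}(S_{\lambda^{-t}}\nu)|^2=\int\int e^{2\pi i l\lambda^{-t}(y-z)}\,d\nu(y)\,d\nu(z).$$
Since the integrand is bounded and $\nu$ is a probability measure, Fubini's theorem allows me to move the $t$-integral inside, so that the left hand side of the lemma becomes $\int\int I(y-z)\,d\nu(y)\,d\nu(z)$, where
$$I(w):=\int_{0}^{1}e^{2\pi i l\lambda^{-t}w}\,dt.$$
By construction $\int\int I(y-z)\,d\nu\,d\nu$ equals $\int_{0}^{1}|\mathcal{F}_{l}(S_{\lambda^{-t}}\nu)|^2\,dt$, which is real and non-negative, so it coincides with its own modulus and it therefore suffices to bound $\int\int |I(y-z)|\,d\nu\,d\nu$ from above.

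The core of the argument is two complementary estimates for $|I(w)|$. The trivial one, to be used when $|w|$ is small, is $|I(w)|\leq 1$, obtained by bounding the modulus of the integrand by $1$. The substantial one, used when $|w|$ is large, is the oscillatory estimate $|I(w)|\leq \frac{1}{|l|\,|w|\,\log\lambda^{-1}}$. To obtain it I write $I(w)=\int_{0}^{1}e^{i\psi(t)}\,dt$ with phase $\psi(t)=2\pi l\lambda^{-t}w$ and observe that the phase has no stationary point: $\psi'(t)=2\pi l w(\log\lambda^{-1})\lambda^{-t}$ satisfies $|\psi'(t)|\geq 2\pi|l|\,|w|\,\log\lambda^{-1}$ on $[0,1]$ because $\lambda^{-t}\geq 1$ there. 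Integrating by parts once via $e^{i\psi}=\tfrac{1}{i\psi'}\tfrac{d}{dt}e^{i\psi}$ produces a boundary term controlled by $1/|\psi'|$ at the two endpoints together with an error term governed by $\int_{0}^{1}\bigl|\psi''/(\psi')^2\bigr|\,dt$; since $\psi''/(\psi')^2=\lambda^{t}/(2\pi l w)$, both pieces are bounded by explicit constants times $\frac{1}{|l|\,|w|\,\log\lambda^{-1}}$, and a short computation shows the total constant is at most $1$ (indeed $1/\pi$).

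Finally I split the double integral along the diagonal at the threshold $r$. On $\{(y,z):|y-z|<r\}$ I apply $|I|\leq 1$, and since $\int\int_{|y-z|<r}d\nu(y)\,d\nu(z)=\int\nu(B_r(y))\,d\nu(y)$ this region contributes exactly the second term of the claimed bound. On the complementary region $\{|y-z|\geq r\}$ I apply the oscillatory estimate, which there yields $|I(y-z)|\leq \frac{1}{|l|\,r\,\log\lambda^{-1}}$ uniformly; integrating this constant against the probability measure $\nu\times\nu$ gives at most $\frac{1}{r\,|l|\,\log\lambda^{-1}}$, the first term. Adding the two contributions gives the lemma. The only delicate point is the oscillatory estimate for $I(w)$: one must check that the integration-by-parts constant comes out no larger than $1$, which is precisely what produces the clean coefficient $1$ in the statement, while the remaining work is routine bookkeeping with Fubini and the diagonal decomposition.
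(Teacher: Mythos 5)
Your proof is correct, and since the paper itself imports this lemma from \cite[Lemma 3.2]{Hoc} without proof, the right comparison is with Hochman's argument, which yours matches in essentially every step: expand $|\mathcal{F}_{l}(S_{\lambda^{-t}}\nu)|^2$ against $\nu\times\nu$, apply Fubini, bound the inner oscillatory integral $I(w)$ by $\min\bigl\{1,\ \frac{1}{\pi |l|\,|w|\log \lambda^{-1}}\bigr\}$ (your stationary-phase/integration-by-parts computation is accurate, and the change of variables $u=\lambda^{-t}$ gives the same constant), and split the double integral at $|y-z|=r$ to produce the two terms. Your observation that the true constant is $1/\pi$ rather than $1$ is a (harmless) slight sharpening of the stated bound.
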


%Let $$\theta:=-\frac{\log b}{\log \lambda}.$$ By our hypothesis we know that $\theta$ is irrational. Now for each $n\in\mathbb{N}$ we let $$n'=\lfloor \theta n\rfloor.$$ Our parameter $n'$ has the property that $$b^n\lambda^{n'}=\lambda^{-\theta n}\lambda^{\lfloor \theta n\rfloor}=\lambda^{-R_{\theta}^n0}.$$

\noindent We now return to our proof. \begin{proof}[Proof of Theorem \ref{Main theorem}]
By Weyl's equidistribution criterion, to prove that $\mu_{\p}$ almost every $x$ is normal in base $b,$ it is sufficient to show that for any $l\in \mathbb{Z}\setminus\{0\}$ for $\mu_{\p}$ almost every $x$ we have \begin{equation}
\label{Weyl}
\lim_{N\to\infty}\frac{1}{N}\sum_{n=1}^{N}e_{l}(T_{b}^nx)=0.
\end{equation} Moreover, because of the disintegration $\mu_{\p}=\int \mu_{\om}\,d\mathbb{P}$ provided by item $1$ from Proposition \ref{technical prop}, it is in fact sufficient to show that for any $l \in \mathbb{Z}\setminus\{0\}$ we have that $\mu_{\om}$ almost every $x$ satisfies \eqref{Weyl} for $\mathbb{P}$ almost every $\om$. To establish this latter statement we will prove that for any $l\in\mathbb{Z}\setminus\{0\}$ and $\epsilon>0$ we have
\begin{equation}
\label{WTS}\mathbb{P}\left(\om: \mu_{\om} \textrm{ almost every } x \textrm{ satisfies }\limsup_{N\to\infty}\left|\frac{1}{N}\sum_{n=1}^{N}e_{l}(T_{b}^nx)\right|<\epsilon\right)>1-\epsilon.
\end{equation} To see why \eqref{WTS} implies this statement consider the set $$G:=\bigcap_{J=1}^{\infty}\bigcup_{j=J}^{\infty}\left\{\om:\mu_{\om} \textrm{ almost every } x \textrm{ satisfies }\limsup_{N\to\infty}\left|\frac{1}{N}\sum_{n=1}^{N}e_{l}(T_{b}^nx)\right|<\frac{1}{j} \right\}.$$ Equation \eqref{WTS} implies that $\mathbb{P}(G)=1.$ Moreover, for any $\om\in G$ we clearly have that $\mu_{\om}$ almost every $x$ satisfies \eqref{Weyl}. Therefore \eqref{WTS} implies our statement and so to complete our proof it is sufficient to show that \eqref{WTS} holds.

Let us now fix $l\in \mathbb{Z}\setminus \{0\}$ and $\epsilon>0$. We start our proof of \eqref{WTS} by stating the trivial fact that for any $x\in \mathbb{R}$ and $k\in\mathbb{N}$ we have
\begin{equation*}
\limsup_{N\to\infty}\left|\frac{1}{N}\sum_{n=1}^{N}e_{l}(T_{b}^nx)\right|=\limsup_{N\to\infty}\left|\frac{1}{N}\sum_{n=1}^{N}e_{l}(T_{b}^{n+k}x)\right|.
\end{equation*} Importantly the parameter $k$ here can be chosen to depend upon $\epsilon$. To prove that \eqref{WTS} holds we will eventually take $k$ to be sufficiently large in a way that depends upon $\epsilon$.

To each $\om\in \Omega^{\mathbb{N}}$ we associate the refining sequence of partitions $\A_1,\A_2,\A_3,\ldots,$ where for each $m$ the partition $\A_m$ is given by the cylinder sets corresponding to words of length $\lfloor \theta m\rfloor $, i.e. $\A_{m}=\{X_{\om}((i_n)_{n=0}^{\lfloor \theta m\rfloor-1}):i_n\in \omega_n \textrm{ for }0\leq n\leq \lfloor \theta m\rfloor-1\}.$ By Theorem \ref{Hoc1} it follows that for $\mu_{\om}$ almost every $x$ we have 
\begin{equation}
\label{orbit to measure}
\limsup_{N\to\infty}\left|\frac{1}{N}\sum_{n=1}^{N}e_{l}(T_{b}^{n+k}x)\right|=\limsup_{N\to\infty}\left|\frac{1}{N}\sum_{n=1}^{N}\F_{l}(T_{b}^{n+k}\mu_{\om,\A_{n}(x)})\right|.
\end{equation}By item 3. from Proposition \ref{technical prop}, we know that for any $x\in X_{\om}$ we have $$\mu_{\om,\A_{n}(x)}=\delta_{\sum_{j=0}^{n'-1}t_{i_j}\lambda^j}\ast S_{\lambda^{n'}}\mu_{\sigma^{n'}(\om)}$$ for some $(i_0,\ldots,i_{n'-1})\in \I^{n'}$. Therefore 
\begin{align*}
T_{b}^{n+k}\mu_{\om,\A_{n}(x)}=S_{b^{n+k}}\mu_{\om,\A_{n}(x)} \mod 1&= \delta_{b^{n+k}\cdot \sum_{j=0}^{n'-1}t_{i_j}\lambda^j}\ast S_{b^k}S_{b^n\lambda^{n'}}\mu_{\sigma^{n'}(\om)} \mod 1\\
&=\delta_{b^{n+k}\cdot \sum_{j=0}^{n'-1}t_{i_j}\lambda^j}\ast S_{b^k}S_{\lambda^{-R_{\theta}^n0}}\mu_{\sigma^{n'}(\om)} \mod 1\\
&=\delta_{b^{n+k}\cdot \sum_{j=0}^{n'-1}t_{i_j}\lambda^j}\ast S_{\lambda^{-R_{\theta}^n0}}S_{b^k}\mu_{\sigma^{n'}(\om)} \mod 1.
\end{align*}
Substituting the above into \eqref{orbit to measure}, we have the following for $\mu_{\om}$ almost every $x$:
\begin{align*}
\limsup_{N\to\infty}\left|\frac{1}{N}\sum_{n=1}^{N}e_{l}(T_{b}^{n+k}x)\right|&=\limsup_{N\to\infty}\left|\frac{1}{N}\sum_{n=1}^{N}\F_{l}\left(\delta_{b^{n+k}\cdot \sum_{j=0}^{n'-1}t_{i_j}\lambda^j}\ast S_{\lambda^{-R_{\theta}^n0}}S_{b^k}\mu_{\sigma^{n'}(\om)}\right)\right|\\
&\leq \limsup_{N\to\infty}\frac{1}{N}\sum_{n=1}^{N}\left|\F_{l}\left(\delta_{b^{n+k}\cdot \sum_{j=0}^{n'-1}t_{i_j}\lambda^j}\ast S_{\lambda^{-R_{\theta}^n0}}S_{b^k}\mu_{\sigma^{n'}(\om)}\right)\right|\\
&=\limsup_{N\to\infty}\frac{1}{N}\sum_{n=1}^{N}\left|\F_{l}\left( S_{\lambda^{-R_{\theta}^n0}}S_{b^k}\mu_{\sigma^{n'}(\om)}\right)\right|.
\end{align*} In the final line we have used the fact that for any Borel probability measure $\mu$ and Dirac mass $\delta_{y},$ we have $|\mathcal{F}_{l}(\delta_y\ast\mu)|=|\mathcal{F}_{l}(\mu)|$ for any $l\in\mathbb{Z}$. We emphasise that the last term only depends upon $\om$ and provides an upper bound for $$\limsup_{N\to\infty}\left|\frac{1}{N}\sum_{n=1}^{N}e_{l}(T_{b}^{n+k}x)\right|$$ for $\mu_{\om}$ almost every $x$. Applying Theorem \ref{Hoc2} we know that for $\mathbb{P}$ almost every $\om$ there exists a measure $\nu_{\om}$ on $\mathbb{R}/\mathbb{Z}\times \Omega^{\mathbb{N}}$ such that $$\lim_{N\to\infty}\frac{1}{N}\sum_{n=1}^{N}\left|\F_{l}\left( S_{\lambda^{-R_{\theta}^n0}}S_{b^k}\mu_{\sigma^{n'}(\om)}\right)\right|=\int|\mathcal{F}_{l}(S_{\lambda^{-t}}S_{b^k}\mu_{\om'})| d\nu_{\om}.$$ Moreover because $\theta$ is irrational, Theorem \ref{Hoc2} also implies that $\int \nu_{\om}\, d\mathbb{P}=\tau \times \mathbb{P}$ where $\tau$ is the Lebesgue measure. It follows from the above that to establish \eqref{WTS} it is sufficient to prove that 
\begin{equation}
\label{WTS2}
\mathbb{P}\left(\om: \int|\mathcal{F}_{l}(S_{\lambda^{-t}}S_{b^k}\mu_{\om'})| d\nu_{\om}<\epsilon\right)>1-\epsilon.
\end{equation} Using Markov's inequality, the relation $\int \nu_{\om}\, d\mathbb{P}=\tau \times \mathbb{P},$ and the Cauchy-Schwartz inequality, we have
\begin{align*}
\epsilon\cdot \mathbb{P}\left(\om:\int|\mathcal{F}_{l}(S_{\lambda^{-t}}S_{b^k}\mu_{\om'})| \nu_{\om}\geq \epsilon\right) &\leq \int \int |\mathcal{F}_{l}(S_{\lambda^{-t}}S_{b^k}\mu_{\om'})| d\nu_{\om}\,d\mathbb{P}\\
&=\int \int |\mathcal{F}_{l}(S_{\lambda^{-t}}S_{b^k}\mu_{\om})| d\tau\,d\mathbb{P}\\
&\leq \int \left(\int |\mathcal{F}_{l}(S_{\lambda^{-t}}S_{b^k}\mu_{\om})|^{2} d\tau\right)^{1/2} d\mathbb{P}.
\end{align*}Applying Lemma \ref{Hoc3} with $r=b^{k/2},$ it follows from the above that 
\begin{align*}
&\epsilon\cdot \mathbb{P}\left(\om:\int|\mathcal{F}_{l}(S_{\lambda^{-t}}S_{b^k}\mu_{\om'})| \nu_{\om}\geq \epsilon\right)\\
\leq &\int \left(\frac{1}{b^{k/2}\cdot |l| \cdot \log \lambda^{-1}}+\int S_{b^k}\mu_{\om}(B_{b^{k/2}}(y))\, dS_{b^k}\mu_{\om}\right)^{1/2}\, d\mathbb{P}\\
= & \int \left(\frac{1}{b^{k/2}\cdot |l| \cdot \log \lambda^{-1}}+\int \mu_{\om}(B_{b^{-k/2}}(y))\, d\mu_{\om}\right)^{1/2}\, d\mathbb{P}.
\end{align*} For an arbitrary non-atomic measure $\mu$ we know that $\lim_{k\to\infty}\int \mu(B_{b^{-k/2}}(y))\, d\mu=0.$ By item $2$ from Proposition \ref{technical prop} we know that for $\mathbb{P}$ almost every $\om$ the measure $\mu_{\om}$ is non-atomic. Therefore by choosing $k$ sufficiently large we have $$\int \left(\frac{1}{b^{k/2}\cdot |l| \cdot \log \lambda^{-1}}+\int \mu_{\om}(B_{b^{-k/2}}(y))\, d\mu_{\om}\right)^{1/2}\, d\mathbb{P}<\epsilon^2.$$ Applying this inequality in the above we may conclude that $$\mathbb{P}\left(\om:\int|\mathcal{F}_{l}(S_{\lambda^{-z}}S_{b^k}\mu_{\om'})| \nu_{\om}\geq \epsilon\right)<\epsilon.$$ This implies \eqref{WTS2} and completes our proof of Theorem \ref{Main theorem}. 
\end{proof}

\end{document}